\begin{document}  

\year{201}
\title{ОБЩИЕ ДИВЕРГЕНТНЫЕ УСЛОВИЯ УСТОЙЧИВОСТИ ДИНАМИЧЕСКИХ СИСТЕМ}%

\thanks{Результаты раздела 3 получены при поддержке гранта Российского фонда фундаментальных исследований (№ 17-08-01266).}

\authors{И.Б.~ФУРТАТ, д-р~техн.~наук (cainenash@mail.ru)\\
(ИПМаш РАН, Университет ИТМО, Санкт-Петербург)}

\maketitle

\begin{abstract}
Предложены новые необходимые и достаточные условия устойчивости динамических систем с использованием свойств потока и дивергенции вектора фазовой скорости. Данные условия обобщают известные результаты В.П. Жукова и А. Рантцера (A. Rantzer). Установлена связь между методами Ляпунова и предложенными методами. Применение разработанных результатов для исследования устойчивости линейных систем позволило свести задачу к вопросу разрешимости матричных неравенств. Получены методы синтеза законов управления для линейных и нелинейных систем. Приведены примеры, иллюстрирующие применимость предложенного метода и существующих.

\end{abstract}

\textit{Ключевые слова:} динамическая система, устойчивость, поток векторного поля, дивергенция, формула Остоградского-Гаусса, управление.

\section{Введение}

Динамическими моделями описывается множество процессов в окружающем макро- и микромире. Одним из важных вопросов эволюции таких систем является исследование сходимости решений данных моделей. Однако найти явное решение дифференциального уравнения не всегда представляется возможным, а численные решения могут значительно отличаться от истинного \cite{Afanasiev03}.

Хорошо известно, что методы функций Ляпунова позволяют установить устойчивость решений дифференциальных уравнений, не решая их. Впервые это показано А.М. Ляпуновым в конце 19 века в его докторской диссертации (позже опубликованной в \cite{Lyapunov50}) с применением к задачам астрономии и движения жидкости. Последующее развитие метода Ляпунова для исследования устойчивости и неустойчивости различных динамических систем, а также приложения полученных результатов в авиации, технике, механике и т.д. можно найти в следующих классических трудах \cite{Chetaev55,Liotov62,Malkin66,Zubov84,Rumiancev87}.
В зависимости от решаемых задач функция Ляпунова также интерпретируется  как потенциальная функция (potential function) \cite{Yuan14}, функция энергии (energy function) \cite{Bikdash00} или функция хранения (storage function) \cite{Willems72}. Однако основное ограничение в использовании аппарата функций Ляпунова состоит в поиске данных функций. Поэтому в настоящей статье предлагается рассмотреть альтернативный метод исследования устойчивости динамических систем на базе свойств потока и дивергенции вектора фазовой скорости.

Несмотря на то, что в западной литературе первенство в исследовании устойчивости с помощью дивергенции вектора фазовой скорости отдается А. Рантцеру \cite{Rantzer00,Rantzer01}, в отечественной литературе данная идея впервые была опубликована примерно на 20 лет раньше В.П. Жуковым в \cite{Jukov78}. В \cite{Jukov78} исследуется неустойчивость решения нелинейного дифференциального уравнения с помощью дивергенции векторного поля. Здесь же, как и в \cite{Rantzer00,Rantzer01}, для доказательства основного результата использовалась теорема Лиувилля \cite{Arnold74}. Затем в течение примерно 30 лет по исследованию неустойчивости различного вида динамических систем В.Н. Жуковым опубликован цикл работ, с частью которых можно свободно ознакомится на сайте журнала ''Автоматики и телемеханики''. Отметим только некоторые его статьи, в которых результаты близки к результатам в зарубежной литературе. В \cite{Jukov79} приведено необходимое условие устойчивости нелинейных систем в виде неположительности дивергенции векторного поля фазовой скорости. В \cite{Jukov90} впервые для исследования неустойчивости нелинейных динамических систем вводится вспомогательная скалярная функция. Позже в \cite{Rantzer00} А. Рантцер рассмотрел подобную функцию к данной скалярной функции, которую назвал функцией плотности (density function). В \cite{Jukov99} впервые получены условия устойчивости для систем второго порядка. Затем А. Рантцер в работах \cite{Rantzer00,Rantzer01} обсуждает сходимость почти всех решений нелинейных динамических систем произвольного порядка и рассматривает вопросы синтеза закона управления. Подход А. Рантцера отличается от подхода В.П. Жукова тем, что для исследования устойчивости он использует функцию плотности, которая подобна обратной вспомогательной функции В.П. Жукова, за исключением их свойств в точке равновесия динамической системы. В настоящее время дивергентный подход А. Рантцера \cite{Rantzer00,Rantzer01} распространен на различного рода системы \cite{Monzon03,Loizou08,Castaneda15,Karabacak18}.
В статье \cite{Furtat13} предложен совершенно другой способ исследования устойчивости динамических систем с использованием свойств потока вектора фазовой скорости через замкнутую выпуклую поверхность и установлена связь дивергентного метода со вторым методом Ляпунова.

Однако результатам \cite{Jukov79,Jukov99,Rantzer01,Furtat13} присущи следующие особенности:

1) необходимое условие \cite{Jukov79} достаточно усиленное;

2) метод \cite{Jukov99} обоснован только для систем второго порядка. Причем существует ряд систем второго порядка, для которых результаты \cite{Jukov99} не выполняются;

3) достаточные условия в \cite{Rantzer01} гарантируют сходимость только части решений системы в зависимости от начальных условий. Так, если условия \cite{Rantzer01} выполнены, то при одних начальных условиях решения сходятся к точке равновесия, а при других -- нет. Причем метод \cite{Rantzer01} не позволяет определить область притяжения. Более того, существует ряд неустойчивых систем во всей области определения для которых условия \cite{Rantzer01} также выполняются. Как следствие, условие устойчивости в \cite{Rantzer01} для линейных систем не всегда выполнено. В результате, алгоритмы управления, предложенные в \cite{Rantzer01}, не всегда гарантируют устойчивость замкнутой системы. Более того, в \cite{Rantzer01} для линейных систем условия работоспособности статического регулятора сведены к разрешимости нелинейного матричного неравенства, которое может быть выполнено и для неустойчивой замкнутой системы;

4) условия устойчивости в \cite{Furtat13} требует существования преобразования координат, которое приводит исходную систему к диагональному виду. Для нелинейных систем поиск такого преобразования является трудно решаемой задачей.

Настоящая статья посвящена разработке общего принципа исследования устойчивости динамических систем с использованием свойств потока и дивергенции вектора фазовой скорости, который позволит преодолеть особенности 1--4. Дополнительно, с помощью полученных результатов будут решены следующие задачи:

а) будут получены новые необходимые условия устойчивости, которые гарантируют точнее результаты, чем условия в \cite{Jukov79,Jukov99,Rantzer01}. Более того, будет показано, что достаточные условия сходимости почти всех решений в \cite{Rantzer01} на самом деле являются частным случаем полученных в статье необходимых условий устойчивости;

б) будет установлена связь метода Ляпунова с предложенными методами исследования устойчивости на базе потока и дивергенции вектора фазовой скорости;

в) будет показано, что скалярные функции, предложенные в \cite{Jukov90,Jukov99,Rantzer01}, не являются произвольными. Свойства данных функций зависят от поверхности интегрирования;

г) будут получены новые достаточные условия устойчивости. Для линейных систем данные условия будут сведены к разрешимости нелинейных и линейных матричных неравенств.

Статья сопровождается примерами с иллюстрацией особенностей 1--3 и соответствующими результатами использования нового подхода.

\section{Основной результат}
\label{Sec2}

Рассмотрим динамическую систему

\begin{equation}
\label{eq1}
\begin{array}{l} 
\dot{x}=f(x),
\end{array}
\end{equation} 
где $x=[x_1, ..., x_n]^{\rm T}$ -- вектор состояния, $f=[f_1,...,f_n]^{\rm T}: D \to \mathbb R^{n}$ -- непрерывно-дифференцируемая функция, определенная в области $D \subset \mathbb R^{n}$. Множество $D$ содержит начало координат и $f(0)=0$.
Ради простоты положим, что область притяжения $D_A$ точки $x=0$ совпадает с областью $D$. Однако все полученные результаты будут справедливы, если $D_A \subset D$ или $D_A=\mathbb R^{n}$. Обозначим $\bar{D}$ -- граница области $D$.

В статье будем использовать следующие обозначения: $grad \{S(x)\}=\Big[ \frac{\partial S}{\partial x_1}, ..., \frac{\partial S}{\partial x_n} \Big]^{\rm T}$ -- градиент функции $S(x)$, $div\{f(x)\}=\frac{\partial f_1}{\partial x_1}+...+\frac{\partial f_n}{\partial x_n}$ -- дивергенция векторного поля $f(x)$, $|\cdot|$ -- евклидова норма соответствующего вектора, $trace(A)$ -- след матрицы $A$. 
Под устойчивостью будем понимать устойчивость нулевого положения равновесия системы по Ляпунову  \cite{Khalil09}.

Сформулируем необходимое условие устойчивости \eqref{eq1}.

\begin{theorem}
\label{Th1}
Пусть $x=0$ -- асимптотически устойчивая точка равновесия системы \eqref{eq1}. Тогда существует положительно определенная непрерывно-дифференцируемая функция $S(x)$ такая, что $S(x) \to \infty$ при $x \to \bar{D}$, $|grad\{S(x)\}| \neq 0$ для любых $x \in D \setminus \{0\}$, $S(0)=0$ и выполнено одно из следующих условий:

1) $div\{|grad\{S(x)\}|f(x)\} < 0$ для любых $x \in D \setminus \{0\}$ и $div\{|grad\{S(x)\}|f(x)\} \big|_{x=0}=0$;

2) $div\{|grad\{S^{-1}(x)\}|f(x)\} > 0$ для любых $x \in D \setminus \{0\}$ и функция $div\{|grad\{S^{-1}(x)\}|f(x)\}$ интегрируема в области $\{x \in D: S^{-1}(x) \geq C\} \subset D$, где $C>0$.

\end{theorem}



\begin{proof}
Согласно \cite[теорема 4.17]{Khalil09}, если $x=0$ -- асимптотически устойчивая точка равновесия системы \eqref{eq1}, то существует непрерывно-дифференцируемая положительно-определенная функция $S(x)$ такая, что $S(x) \to \infty$ при $x \to \bar{D}$ и $grad\{S(x)\}^{\rm T}f(x) \leq -W(x)$ для любых $x \in D \setminus \{0\}$, где $W(x)$ -- положительно-определенная функция. Значит $grad\{S(x)\}^{\rm T}f(x) < 0$ для любых $x \in D \setminus \{0\}$ и $grad\{S(x)\}^{\rm T}f(x) \Big|_{x=0} = 0$. Заметим, что если $D=\mathbb R^n$, то функция $S(x)$ является радиально неограниченной.

Рассмотрим доказательство двух случаев по отдельности.

1) Обозначим $V=\{x \in D: S(x) \leq C\} \subset D$ и $\Gamma=\{x \in D: S(x)=C\}$. Если $grad\{S(x)\}^{\rm T}f(x) < 0$ при $x \in D \setminus \{0\}$, то и $\frac{1}{|grad\{S(x)\}|} grad\{S(x)\}^{\rm T} |grad\{S(x)\}|f(x) < 0$. Значит будет справедливо следующее выражение
\begin{equation*}
\label{eq2}
\begin{array}{l}
F_1=\oint_{\Gamma} \frac{1}{|grad\{S(x)\}|} grad\{S(x)\}^{\rm T} |grad\{S(x)\}|f(x) d\Gamma < 0 ~~~ \mbox{при} ~~~ x \in D \setminus \{0\}.
\end{array}
\end{equation*} 
Здесь $F_1$ -- поток векторного поля $|grad\{S(x)\}|f(x)$ через поверхность $\Gamma$ с единичным вектором нормали $\frac{1}{|grad\{S(x)\}|} grad\{S(x)\}$. Воспользовавшись формулой Остоградского-Гаусса (в литературе ее также можно найти в виде divergence theorem (теорема о дивергенции) и Gauss theorem (теорема Гаусса)), получим
\begin{equation}
\label{eq3}
\begin{array}{l}
F_1=\int_{V}div\{|grad\{S(x)\}| f(x)\} dV < 0 ~~~ \mbox{при} ~~~ x \in D \setminus \{0\}.
\end{array}
\end{equation} 
Поскольку $C$ -- произвольная константа, то \eqref{eq3} будет выполнено для любого достаточно малого значения $C$. Обозначим $\bar{V}$ -- объем области $V$. Так как $\bar{V} \to 0$ при $C \to 0$, то из соотношений $div\{|grad\{S(x)\}| f(x)\}=\lim_{\bar{V} \to 0} F_1 / \bar{V}$, $F_1<0$ и $\bar{V}>0$ следует, что $div\{|grad\{S(x)\}| f(x)\}<0$ при $x \in D \setminus \{0\}$.  Из условия $grad\{S(x)\}^{\rm T}f(x) \big|_{x=0}=0$ следует, что $F_1 = 0$ при $x=0$. Тогда из \eqref{eq3} следует, что $div\{|grad\{S(x)\}|f(x)\} \big|_{x=0}=0$. Геометрическая иллюстрация случая 1 приведена на рис. \ref{Fig010}.

\hspace{-3cm}
\textbf{Рис. ~\ref{Fig010}. $\to$}

2) Пусть $V_{inv}=\{x \in D: S^{-1}(x) \geq C\} \subset D$ и $\Gamma_{inv}=\{x \in D: S^{-1}(x)=C\}$. Если $grad\{S(x)\}^{\rm T}f(x) < 0$ и $S(x)>0$ при $x \in D \setminus \{0\}$, то $grad\{S^{-1}(x)\}^{\rm T}f(x)=-S^{-2}(x)grad\{S(x)\}^{\rm T}f(x) > 0$. С другой стороны $grad\{S^{-1}(x)\}^{\rm T}f(x)=\frac{1}{|grad\{S^{-1}(x)\}|}grad\{S^{-1}(x)\}^{\rm T} |grad\{S^{-1}(x)\}| f(x)$. Значит будет выполнено следующее соотношение
\begin{equation*}
\label{eq4}
\begin{array}{l}
F_2=\oint_{\Gamma_{inv}}\frac{1}{|grad\{S^{-1}(x)\}|}grad\{S^{-1}(x)\}^{\rm T} |grad\{S^{-1}(x)\}| f(x) d \Gamma_{inv} > 0 ~~~ \mbox{при} ~~~ x \in D \setminus \{0\}.
\end{array}
\end{equation*}
Здесь $F_2$ -- поток векторного поля $|grad\{S^{-1}(x)\}|f(x)$ через поверхность $\Gamma_{inv}$ с единичным вектором нормали $\frac{1}{|grad\{S^{-1}(x)\}|} grad\{S^{-1}(x)\}$. Очевидно, что знаки $F_1$ и $F_2$ противоположны, поскольку векторы $grad \{S(x)\}$ и $grad \{S^{-1}(x)\}$ противоположно направлены.
Воспользовавшись формулой Остоградского-Гаусса, получим
\begin{equation}
\label{eq5}
\begin{array}{l}
F_2=\int_{V_{inv}}div\{|grad\{S^{-1}(x)\}| f(x)\} dV_{inv} > 0 ~~~ \mbox{при} ~~~ x \in D \setminus \{0\}.
\end{array}
\end{equation}
Так как константа $C$ произвольная, то \eqref{eq5} будет выполнено для любого достаточно большого значения $C$. Пусть $\bar{V}_{inv}$ -- объем области $V_{inv}$. Поскольку $\bar{V}_{inv} \to 0$ при $C \to \infty$, то из $div\{|grad\{S^{-1}(x)\}| f(x)\}=\lim_{\bar{V}_{inv} \to 0} F_2 / \bar{V}_{inv}$, $F_2>0$ и $\bar{V}_{inv}>0$ следует, что $div\{|grad\{S^{-1}(x)\}| f(x)\}>0$ при $x \in D \setminus \{0\}$.
С другой стороны $S^{-1}(x) \to \infty$ при $x \to 0$. Следовательно, из выражения \eqref{eq5} требуется интегрируемость (в несобственном смысле) функции $div\{|grad\{|S^{-1}(x)\}| f(x)\}$ в области $V_{inv}$. Геометрическая иллюстрация случая 2 приведена на рис. \ref{Fig011}. Теорема \ref{Th1} доказана.

\hspace{-3cm}
\textbf{Рис. ~\ref{Fig011}. $\to$}
\end{proof}

Таким образом в теореме \ref{Th1} отмечается, что если система \eqref{eq1} устойчива, то поток векторного поля через поверхность $\Gamma$ ($\Gamma_{inv}$) принимает отрицательное (положительное) значение. Обратное не верно. Поток может принимать отрицательное (положительное) значение не только в случае направленности векторного поля во внутрь поверхности $\Gamma$ ($\Gamma_{inv}$), но и в случае сходимости части компонент векторного поля к нулю и расходимости остальных компонент. Другим примером отрицательного (положительного) значения потока может быть случай, когда при одних начальных условиях решения сходятся, а при других -- расходятся. Соответствующие примеры 2 и 3 будут приведены ниже.
Физический смысл результатов теоремы \ref{Th1} может быть интерпретирован как движение вещества с плотностью $|grad\{S(x)\}|$ ($|grad\{S^{-1}(x)\}|$) под действием поля скорости $f(x)$ через поверхность $\Gamma$ ($\Gamma_{inv}$).

Условия теоремы \ref{Th1} зависят от вида функции $S(x)$, которая связана с поверхностью интегрирования. Сформулируем теорему, которая позволит ослабить данное требование.

\begin{theorem}
\label{Th2}
Пусть $x=0$ -- асимптотически устойчивая точка равновесия \eqref{eq1}. Тогда существует положительно определенная непрерывно-дифференцируемая функция $\rho(x)$, определенная для всех $x \in D$, такая что выполнено одно из условий:

1) $div\{\rho(x)f(x)\} < 0$ для любых $x \in D \setminus \{0\}$ и $div\{\rho(x)f(x)\} \big |_{x=0}=0$;

2) $div\{\rho^{-1}(x)f(x)\} > 0$ для любых $x \in D \setminus \{0\}$ и функция $div\{\rho^{-1}(x)f(x)\}$ интегрируема в области $\{x \in D: \rho^{-1}(x) \geq C\} \subset D$, где $C>0$.

\end{theorem}


Требование $div\{\rho^{-1}(x)f(x)\} > 0$ приведено в качестве достаточного условия сходимости почти всех решений в \cite{Rantzer01}. Очевидно, что данное условие \cite{Rantzer01} является частным требованием к выполнению необходимого условия 2 в теореме \ref{Th2}.

\begin{proof} Доказательство теоремы \ref{Th2} опирается на доказательство теоремы \ref{Th1}. Введем положительно определенную непрерывно-дифференцируемую скалярную функцию $\phi(x)$. Следуя доказательству теоремы \ref{Th1}, рассмотрим два случая.

1) Если $grad\{S(x)\}^{\rm T}f(x) < 0$ при $x \in D \setminus \{0\}$, то и $\phi(x) grad\{S(x)\}^{\rm T}f(x) < 0$. Следовательно, дальнейшее доказательство аналогично доказательству случая 1 в теореме \ref{Th1}, рассмотрев только поток векторного поля $\phi(x)|grad\{S(x)\}|f(x)$ через поверхность $\Gamma$. Обозначив $\rho(x)=\phi(x) |grad\{S(x)\}|$, получим условие 1 теоремы \ref{Th2}.

2) Если $grad\{S(x)\}^{\rm T}f(x) < 0$ при $x \in D \setminus \{0\}$, то и $\phi^{-1}(x) grad\{S(x)\}^{\rm T}f(x) < 0$. Значит дальнейшее доказательство аналогично доказательству случая 2 теоремы \ref{Th1}, но с учетом потока векторного поля $\phi^{-1}(x)|grad\{S^{-1}(x)\}|f(x)$ через поверхность $\Gamma_{inv}$. Обозначив $\rho^{-1}(x)=\phi^{-1}(x) |grad \{S^{-1}(x)\}|$, получим условие 2. Теорема \ref{Th2} доказана.

\end{proof}

Условия теоремы \ref{Th2} позволили не использовать функцию $S(x)$ для исследования устойчивости.  Тем не менее из доказательства теоремы \ref{Th2} следует, что свойства новой функции $\rho(x)$ зависят от свойств поверхности интегрирования, а значит и от свойств $S(x)$. Таким образом, выбор функции $\rho(x)$ не является произвольным, как это отмечалось в \cite{Jukov99, Rantzer01}.  

Рассмотрим применение теоремы \ref{Th2} на следующих примерах и сравним результаты с достаточными условиями, приведенными в \cite{Jukov99,Rantzer01}.

\textit{Пример 1.}
Задана система

\begin{equation}
\label{eq4}
\begin{array}{l} 
\dot{x}_1=x_2,
\\
\dot{x}_2=-c x_1-x_1^2 x_2-x_2^3,
\end{array}
\end{equation} 
где $c$ -- постоянный коэффициент, который может быть равен $1$ или $-1$. При $c=1$ система \eqref{eq4} устойчива, при $c=-1$ -- неустойчива. Фазовые портреты такой системы приведены на рис.~\ref{Fig3}. Исследуем сначала на устойчивость \eqref{eq4} с помощью условий из \cite{Jukov79,Jukov99,Rantzer01}. Затем приведем результаты теоремы \ref{Th2}.

\hspace{-3cm}
\textbf{Рис. ~\ref{Fig3}. $\to$}

Согласно \cite{Jukov79}, необходимым условием устойчивости является выполнение неравенства $div \{f(x)\} \leq 0$ в окрестности точки равновесия. В результате $div\{f(x)\}=-x_1^2-3x_2^2 \leq 0$ для любых значений $c$.

Согласно достаточному условию \cite{Jukov99}, система \eqref{eq4} обладает $\omega$-инвариантным множеством $B_T$ (см. рис.~\ref{Fig3}). Также, в соотвествии с \cite{Jukov99} (стр. 37, первый абзац) функцию $\mu(x)$ можно выбрать в виде $\mu(x)=1$. Тогда $div \{\mu(x)f(x)\} = 0$ на $mes = 0$ и $div \{\mu(x)f(x)\} = -x_1^2-3x_2^2 \leq 0$. Таким образом условия \cite{Jukov99} выполнены для любых $c$, как и в предыдущем случае.

Достаточное условие \cite{Rantzer01} связано с проверкой неравенства $div\{\rho^{-1}(x)f(x)\} < 0$. Выберем $\rho(x)=|x|^{2\alpha}$, где $\alpha$ -- натуральное число. Тогда функция $div\{\rho^{-1}(x) f(x)\}$ не является положительной ни при $c=1$, ни при $c=-1$. Следовательно, вывод о сходимости решений к точке равновесия сделать нельзя.

Теперь исследуем \eqref{eq4} с помощью теоремы \ref{Th2}. Выбрав $\rho(x)=|x|^{2\alpha}$, получим $div\{\rho(x) f(x)\}=2\alpha |x|^{2\alpha-2}(x_1 x_2 - cx_1 x_2-x_1^2 x_2^2-x_2^4)-(x_1^2+3x_2^2) |x|^{2\alpha} < 0$ для любых $\alpha$, $c=1$ и $x \neq 0$, а также $div\{\rho(x) f(x)\}\big|_{x=0}=0$. При $c=-1$ функция $div\{\rho(x) f(x)\}$ не является знакоопределенной. Таким образом, первое условие теоремы \ref{Th2} выполнено только для $c=1$, что соответствует устойчивому положению равновесия.

Несмотря на то, что теорема \ref{Th2} дала точнее результат, чем условия \cite{Jukov79,Jukov99,Rantzer01}, тем не менее условия теоремы \ref{Th2} необходимые. Приведем по этому поводу еще два примера, а затем сформулируем достаточные условия устойчивости.

\textit{Пример 2.} Рассмотрим систему

\begin{equation}
\label{eq04}
\begin{array}{l} 
\dot{x}_1=-x_1,
\\
\dot{x}_2=b x_2-x_1^2 x_2,~~~b>0,
\end{array}
\end{equation} 
которая устойчива только по части переменных, т.е. $x_1 \to 0$ и $x_2 \to \infty$ при  $t \to \infty$, см. рис.~\ref{Ust_chast_per}. Как и в предыдущем примере выберем $\rho(x)=|x|^{2\alpha}$. Тогда $div\{\rho(x) f(x)\}= |x|^{2\alpha-2}[(-2\alpha+b-1)x_1^2+(2\alpha b+b-1)x_2^2-2\alpha x_1^2 x_2^2-x_1^2(x_1^2+x_2^2)] < 0$ при $b < \frac{1}{1+2\alpha}$ и $x \neq 0$, а также $div\{\rho(x) f(x)\}\big|_{x=0}=0$. Таким образом первое условие теоремы \ref{Th2} выполнено. Выбрав $\mu(x)=\rho(x)$ в \cite{Jukov99} получили бы, что система \eqref{eq04} устойчива при том же $b < \frac{1}{1+2\alpha}$. Однако в \cite{Jukov99} получены достаточные условия устойчивости, в отличие от теоремы \ref{Th2}. Достаточные условия \cite{Rantzer01} с $\rho^{-1}(x)=|x|^{-2\alpha}$ не дают ответа о сходимости решений поскольку функция $div\{\rho^{-1}(x) f(x)\}$ не является знакоопределенной.

\hspace{-3cm}
\textbf{Рис. ~\ref{Ust_chast_per}. $\to$}

\textit{Пример 3.} Рассмотрим систему

\begin{equation}
\label{eq004}
\begin{array}{l} 
\dot{x}_1=-x_1+x_1^2-x_2^2,
\\
\dot{x}_2=-x_2+2x_1 x_2,
\end{array}
\end{equation} 
которая имеет две точки равновесия $(0,0)$ и $(1,0)$. Выберем $\rho^{-1}(x)=|x|^{-2\alpha}$. Тогда $div\{\rho^{-1}(x) f(x)\}=|x|^{-2\alpha}[2\alpha -2+2x_1(2-\alpha)] > 0$ при $\alpha = 2$. Таким образом второе условие теоремы \ref{Th2} выполнено. Однако все траектории системы сходятся к точке $(0,0)$, за исключением тех, что начинаются на полуоси $x_1 \geq 1$ и $x_2 = 0$ (см. рис.~\ref{Ust_Rantz}). Данный вывод согласуется с необходимым условием \ref{Th2}. Однако такой же результат  получим с помощью достаточного условия \cite{Rantzer01}. Значит  условия \cite{Rantzer01} не гарантирует устойчивость системы \eqref{eq004} и не позволяет определить область притяжения.

\hspace{-3cm}
\textbf{Рис. ~\ref{Ust_Rantz}. $\to$}

Теперь сформулируем  достаточные условия устойчивости.

\begin{theorem}
\label{Th2a}
Пусть задана положительно определенная непрерывно-дифференцируемая функция $\rho(x)$ определенная в области $D$. Тогда точка $x=0$ устойчива (асимптотически устойчива), если выполнено одно из следующих условий:

1) $div\{\rho(x)f(x)\} \leq \rho(x) div\{f(x)\}$ ($div\{\rho(x)f(x)\} < \rho(x) div\{f(x)\}$) для любых $x \in D \setminus \{0\}$ и $div\{\rho(x)f(x)\}\big|_{x=0} =0$;

2) $div\{\rho^{-1}(x)f(x)\} \geq 0$ ($div\{\rho^{-1}(x)f(x)\} > 0$) и $div\{f(x)\} \leq 0$ для любых $x \in D \setminus \{0\}$ и $\lim_{|x| \to 0} \big[\rho^2(x) div\{\rho^{-1}(x)f(x)\}\big] =0$;

3) $div\{\rho(x)f(x)\} \leq \beta(x) \rho^2(x) div\{\rho^{-1}(x)f(x)\}$ ($div\{\rho(x)f(x)\} < \beta(x) \rho^2(x) div\{\rho^{-1}(x)f(x)\}$), где $\beta(x) > 1$ и $div\{f(x)\} \leq 0$ или только $\beta(x) = 1$ для любых $x \in D \setminus \{0\}$, а также $div\{\rho(x)f(x)\}\big|_{x=0} =0$ и $\lim_{|x| \to 0} \big[\rho(x) div\{\rho^{-1}(x)f(x)\}\big] =0$.

\end{theorem}

Достаточные условия 1 и 2 можно рассматривать как некоторую связь с соответствующими необходимыми условиями 1 и 2 в теореме \ref{Th2}. Условие 3 является комбинацией условий 1 и 2, которое в теореме \ref{Th7} позволит получить обобщенное для случаев 1 и 2 условие устойчивости линейных систем.

\begin{proof} Приведем доказательство устойчивости для каждого случая в отдельности. Доказательство асимптотической устойчивости аналогично.

1) Из соотношения $div\{\rho(x)f(x)\} = grad \{\rho(x)\}^{\rm T}f(x)+div\{f(x)\}\rho(x)$ следует, что если $div\{\rho(x)f(x)\} \leq div\{f(x)\}\rho(x)$, то и $grad \{\rho(x)\}f(x) \leq 0$ в области $D \setminus \{0\}$. По условию $\rho(0)=0$. Поэтому, если $div\{\rho(x)f(x)\}\big|_{x=0} =0$, то и $grad \{\rho(x)\}f(x) \big|_{x=0}= 0$. Значит, согласно теореме Ляпунова \cite{Khalil09}, система \eqref{eq1} устойчива.

2) Из выражения $div\{\rho^{-1}(x)f(x)\} = grad \{\rho^{-1}(x)\}^{\rm T}f(x)+div\{f(x)\}\rho^{-1}(x)$ следует, что $grad \{\rho(x)\}^{\rm T}f(x)=\rho(x) div\{f(x)\} -\rho^{2}(x)div\{\rho^{-1}(x)f(x)\}$. Если $div\{\rho^{-1}(x)f(x)\} \geq 0$ и $div\{f(x)\} \leq 0$, то $grad \{\rho(x)\}^{\rm T}f(x) \leq 0$ в области $D \setminus \{0\}$. Если $\lim_{|x| \to 0} \big[\rho^2(x) div\{\rho^{-1}(x)f(x)\}\big] =0$, то и $\lim_{|x| \to 0} \big[grad \{\rho(x)\}f(x) \big]= 0$. Значит система \eqref{eq1} устойчива.

3) Условие 3 состоит в объединении результатов условий 1 и 2. Суммируя $\beta(x) grad \{\rho(x)\}^{\rm T}f(x)=\beta(x) \rho(x) div\{f(x)\} -\beta(x) \rho^{2}(x)div\{\rho^{-1}(x)f(x)\}$ и $grad\{\rho(x)\}^{\rm T}f(x) = div\{\rho(x)f(x)\}-div\{f(x)\}\rho(x)$, получим $(1+\beta(x)) grad \{\rho(x)\}^{\rm T}f(x)=div\{\rho(x)f(x)\}-\beta(x) \rho^{2}(x)div\{\rho^{-1}(x)f(x)\}+(\beta(x)-1) \rho(x) div\{f(x)\}$. Если $div\{\rho(x)f(x)\} \leq \beta(x) \rho^2(x) div\{\rho^{-1}(x)f(x)\}$ при $\beta(x)=1$ или $\beta(x)>1$ и $div\{f(x)\} \leq 0$, то $grad \{\rho(x)\}^{\rm T}f(x) \leq 0$  в области $D \setminus \{0\}$. Если $div\{\rho(x)f(x)\}\big|_{x=0} =0$ и $\lim_{|x| \to 0} \big[\rho^2(x) div\{\rho^{-1}(x)f(x)\}\big] =0$, то и $\lim_{|x| \to 0} \big[grad \{\rho(x)\}f(x) \big]= 0$. Значит система \eqref{eq1} устойчива.
Теорема \ref{Th2a} доказана.

\end{proof}

В \cite{Jukov99} отмечалось, что дивергентный подход \cite{Jukov99} может быть применим только для систем второго порядка. Продемонстрируем предложенный алгоритм для систем третьего порядка, а затем сформулируем условие устойчивости для линейных систем произвольного порядка и сравним их с условиями \cite{Rantzer01}.

\textit{Пример 4.}
Рассмотрим систему

\begin{equation}
\label{eq6}
\begin{array}{l} 
\dot{x}_1=x_2-2x_1 x_3^2,
\\
\dot{x}_2=-x_1-2x_2x_3^2,
\\
\dot{x}_3=-2x_3^3.
\end{array}
\end{equation} 

Выберем $\rho(x)=|x|^{2\alpha}$, где $\alpha$ -- натуральное число. Тогда $div\{\rho(x)f(x)\} - \rho(x) div\{f(x)\}=-4 \alpha x_3^2|x|^{2\alpha}<0$ для любых $\alpha$ и $x_3 \neq 0$. В свою очередь $div\{f(x)\}=-10x_3^2 \leq 0$ и $div\{\rho^{-1}(x) f(x)\}=(4\alpha-10)x_3^2|x|^{-2\alpha}>0$ для любых $\alpha \geq 3$ и $x_3 \neq 0$. Пусть $\beta(x)=\beta \geq 1$. Тогда $div\{\rho(x)f(x)\}-\beta \rho^2(x) div\{\rho^{-1}(x)f(x)\} = -(4\alpha+10+4\beta \alpha -10 \beta)x_3^2|x|^{2\alpha} \leq 0$ при $\alpha > \frac{5(\beta -1)}{\beta+1}$ и $x_3 \neq 0$. Все три случая дали одинаковые результаты. Таким образом, система \eqref{eq6} асимптотически устойчива с любыми начальными условиями, когда $x_3(0) \neq 0$. Если начальные условия содержат $x_3(0) = 0$, то система \eqref{eq6} устойчива. Фазовые траектории системы \eqref{eq6} изображены на рис.~\ref{Fig5}, где цикл получен для начального условия с $x_3=0$, спирали -- при $x_3 \neq 0$.

\hspace{-3cm}
\textbf{Рис. ~\ref{Fig5}. $\to$}

\section{Устойчивость линейных систем}
\label{Sec4}

Во введении отмечалось, что для линейных систем $\dot{x}=Ax$ дивергентный подход был рассмотрен в \cite{Rantzer01}, где получены достаточные условия сходимости решений. Приведем данный результат в виде теоремы, как он был сформулирован в \cite{Rantzer01} и ниже приведем примеры, когда результат \cite{Rantzer01} не выполняется. 

\begin{theorem}[\cite{Rantzer01}]
\label{Th4}
Если положительно определенная матрица $P$ удовлетворяет условию
\begin{equation}
\label{eq7}
\begin{array}{l} 
A^{\rm T}P+PA<\alpha^{-1} trace(A) P
\end{array}
\end{equation} 
для некоторого $\alpha>0$, тогда линейная система $\dot{x}=Ax$ устойчива.
\end{theorem}

Неравенство \eqref{eq4} выполняется не только для гурвицевых, но и для негурвицевых матриц $A$. Например, для неустойчивой матрицы $A=\left[\begin{array}{cccc} {0} & {1} \\ {1} & {1} \end{array}\right]$ и $\alpha=0.2$ решение \eqref{eq7} можно определить в виде $P=\left[\begin{array}{cccc} {0.6} & {0.3} \\ {0.3} & {0.9} \end{array}\right]$. Также, для данной матрицы $A$ и $\alpha \leq 0.2$ нетрудно найти положительно определенные матрицы $P$, удовлетворяющие неравенству \eqref{eq7}. Более того, при $\alpha \leq 1$ и гурвицевой матрице $A=\left[\begin{array}{cccc} {0} & {1} \\ {-1} & {-1} \end{array}\right]$ можно указать $P$, которые не являются положительно определенными матрицами. Например, при $\alpha = 1$ имеем $P=\left[\begin{array}{cccc} {-1.5} & {-0.75} \\ {-0.75} & {-1.5} \end{array}\right]$.

Отметим, что неравенство \eqref{eq7} можно получить с помощью условия 2 теоремы \ref{Th2}, выбрав $\rho^{-1}(x)=(x^{\rm T}Px)^{-\alpha}$, но при $\alpha>1$ для обеспечения интегрируемости $div\{\rho^{-1}(x)f(x)\}$. Более того, с помощью условия 1 теоремы \ref{Th2} можно получить другое условие

\begin{equation}
\label{eq07}
\begin{array}{l} 
A^{\rm T}P+PA+\alpha^{-1}trace(A)P < 0,
\end{array}
\end{equation}
выбрав $\rho(x)=(x^{\rm T}Px)^{\alpha}$ и $\alpha>0$. Однако матричные неравенства \eqref{eq7} и \eqref{eq07} являются необходимыми условиями устойчивости.

Теперь, опираясь на результаты теоремы \ref{Th2a} сформулируем достаточное условие устойчивости для линейных систем.

\begin{theorem}
\label{Th7}
Линейная система $\dot{x}=Ax$ устойчива, если
\begin{equation}
\label{eq08}
\begin{array}{l} 
A^{\rm T}P+PA-\kappa~ trace(A)P<0,
\end{array}
\end{equation} 
где $\kappa > 0$ и $trace(A) \leq 0$ или только $\kappa = 0$, $P$ -- положительно определенная симметричная матрица.
\end{theorem}

\begin{proof} Выберем $\rho(x)=(x^{\rm T}Px)^{\alpha}$, где $\alpha>0$.
Рассмотрим случай 1 теоремы \ref{Th2a}. Тогда $div\{\rho(x)f(x)\} - \rho(x) div\{f(x)\} = \alpha (x^{\rm T}Px)^{\alpha-1}x^{\rm T}[A^{\rm T}P+PA]x < 0$, если $A^{\rm T}P+PA<0$.

Теперь рассмотрим случай 2 теоремы \ref{Th2a}. Условие $div\{\rho^{-1}(x)f(x)\} = -\alpha (x^{\rm T}Px)^{-\alpha-1}x^{\rm T}[A^{\rm T}P+PA-\alpha^{-1} trace(A)P]x >0$ будет выполнено, если $A^{\rm T}P+PA-\alpha^{-1} trace(A)P < 0$ и $trace(A) \leq 0$ (следует из условия $div\{f(x)\} \leq 0$) при $\kappa=\frac{1}{\alpha} > 0$.

Воспользуемся случаем 3 теоремы \ref{Th2a}. Пусть $\beta(x)=\beta$ и $\kappa=\frac{\beta-1}{\alpha(1+\beta)}$. Условие $div\{\rho(x)f(x)\}-\beta \rho^2(x) div\{\rho^{-1}(x)f(x)\} = \alpha(1+\beta)(x^{\rm T}Px)^{\alpha-1}x^{\rm T}[A^{\rm T}P+PA-\frac{1}{\alpha} \frac{\beta-1}{\beta+1} trace(A)P]x < 0$ выполнено, если выполнено \eqref{eq08} при $\beta = 1$ ($\kappa = 0$) или при $\beta > 1$ ($\kappa > 0$) и $trace(A) \leq 0$. Теорема \ref{Th7} доказана.

Отметим, что при синтезе закона управления условие \eqref{eq08} является линейным матричным неравенством только при $\kappa=0$. Однако ЛМН также можно получить с использованием теоремы \ref{Th2a} и для $\kappa \neq 0$.
\end{proof}

\textit{Следствие 1.}
Если в теореме \ref{Th2a} случае 2 положить $\alpha=trace(A) \gamma$, $\gamma>0$, то для устойчивости $\dot{x}=Ax$ достаточно выполнение ЛМН $A^{\rm T}P+PA+\gamma P<0$.

\section{Синтез закона управления}
\label{Sec5}

Рассмотрим динамическую систему аффинную по управлению

\begin{equation}
\label{eq9}
\begin{array}{l} 
\dot{x}=f(x)+g(x)u(x),
\end{array}
\end{equation} 
где $u(x)$ -- сигнал управления, функции $f(x)$, $g(x)$ и $u(x)$ -- непрерывно-дифференцируемые в области $D$, причем $f(0)=0$ и $g(0)=0$. Следуя теореме \ref{Th2}, сформулируем следующий результат.

\begin{theorem}
\label{Th6}
Пусть задана положительно определенная непрерывно-дифференцируемая функция $\rho(x)$ при $x \in D$. Тогда система \eqref{eq9} с законом управления $u=u(x)$ устойчива (асимптотически устойчива), если выполнено одно из следующих условий:

1) $div\{\rho(x)(f(x)+g(x)u(x))\} \leq \rho(x) div\{f(x)+g(x)u(x)\}$ ($div\{\rho(x)(f(x)+g(x)u(x))\} < \rho(x) div\{f(x)+g(x)u(x)\}$) для любых $x \in D \setminus \{0\}$ и $div\{\rho(x)(f(x)+g(x)u(x))\}\big|_{x=0}=0$;

2) $div\{\rho^{-1}(x)(f(x)+g(x)u(x))\} \geq 0$ ($div\{\rho^{-1}(x)(f(x)+g(x)u(x))\} > 0$) для любых $x \in D \setminus \{0\}$ и $\lim_{|x| \to 0} \big[\rho^2(x) div\{\rho^{-1}(x)(f(x)+g(x)u(x))\}\big] =0$;

3) $div\{\rho(x)(f(x)+g(x)u(x))\} \leq \beta(x) \rho^2(x) div\{\rho^{-1}(x)(f(x)+g(x)u(x))\}$, $\beta \geq 1$ ($div\{\rho(x)(f(x)+g(x)u(x))\} < \beta(x) \rho^2(x) div\{\rho^{-1}(x)(f(x)+g(x)u(x))\}$), где $\beta(x) > 1$ и $div\{f(x)+g(x)u(x)\} \leq 0$ или только $\beta(x) = 1$ для любых $x \in D \setminus \{0\}$, а также $div\{\rho(x)(f(x)+g(x)u(x))\}\big|_{x=0} =0$ и $\lim_{|x| \to 0} \big[\rho(x) div\{\rho^{-1}(x)(f(x)+g(x)u(x))\}\big] =0$.

\end{theorem}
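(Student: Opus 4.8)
The plan is to reduce Theorem~\ref{Th6} directly to Theorem~\ref{Th2a} by viewing the closed-loop dynamics as a single autonomous system. First I would introduce the closed-loop vector field $\tilde{f}(x) = f(x) + g(x)u(x)$ and record two facts. Since $f$, $g$ and $u$ are continuously differentiable on $D$, so is $\tilde{f}$; and since $f(0) = 0$ and $g(0) = 0$, we have $\tilde{f}(0) = f(0) + g(0)u(0) = 0$. Thus the closed-loop system $\dot{x} = \tilde{f}(x)$ is of the form~\eqref{eq1}, with $x = 0$ an equilibrium and with the same standing regularity assumptions under which Theorem~\ref{Th2a} was established.

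Next I would observe that the three alternatives of Theorem~\ref{Th6} are obtained from conditions 1, 2 and 3 of Theorem~\ref{Th2a} by the single substitution $f(x) \mapsto \tilde{f}(x)$: every $div\{f(x)\}$, $div\{\rho(x)f(x)\}$ and $div\{\rho^{-1}(x)f(x)\}$ is replaced by $div\{\tilde{f}(x)\}$, $div\{\rho(x)\tilde{f}(x)\}$ and $div\{\rho^{-1}(x)\tilde{f}(x)\}$, while the density $\rho(x)$, the boundary value at $x=0$, and the limit conditions as $|x| \to 0$ are carried over verbatim. Hence each alternative of Theorem~\ref{Th6} is precisely the corresponding alternative of Theorem~\ref{Th2a} for the system $\dot{x} = \tilde{f}(x)$.

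The analytical content then follows from Theorem~\ref{Th2a} without modification. Using the identity $div\{\rho(x)\tilde{f}(x)\} = grad\{\rho(x)\}^{\rm T}\tilde{f}(x) + \rho(x)\,div\{\tilde{f}(x)\}$ together with its inverse-density form $grad\{\rho(x)\}^{\rm T}\tilde{f}(x) = \rho(x)\,div\{\tilde{f}(x)\} - \rho^2(x)\,div\{\rho^{-1}(x)\tilde{f}(x)\}$, each of the three conditions forces $grad\{\rho(x)\}^{\rm T}\tilde{f}(x) \leq 0$ on $D \setminus \{0\}$, with strict inequality yielding the asymptotic case; the value and limit requirements at the origin ensure $grad\{\rho(x)\}^{\rm T}\tilde{f}(x) \to 0$ as $|x| \to 0$. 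Since $\rho$ thereby plays the role of a Lyapunov-type function for the closed loop, the stability criterion of \cite{Khalil09} gives stability, respectively asymptotic stability, of $x = 0$.

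I do not expect a genuine obstacle, as the argument is a verbatim specialisation of Theorem~\ref{Th2a} to the closed-loop field. The only point demanding care is verifying that $\tilde{f}$ inherits the hypotheses of Theorem~\ref{Th2a}, namely continuous differentiability and $\tilde{f}(0) = 0$ --- which is exactly the reason the assumption $g(0) = 0$ is imposed in the problem setting of~\eqref{eq9}. I would therefore present the proof simply as the observation that Theorem~\ref{Th6} is Theorem~\ref{Th2a} applied to $\dot{x} = f(x) + g(x)u(x)$.
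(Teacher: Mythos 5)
Your proposal is correct and matches the paper's own argument: the paper likewise disposes of Theorem~\ref{Th6} by noting that its proof follows directly from that of Theorem~\ref{Th2a}, applied to the closed-loop field $f(x)+g(x)u(x)$ (which inherits smoothness and vanishes at the origin because $f(0)=0$ and $g(0)=0$). You have merely written out explicitly the substitution the paper leaves implicit.
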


Доказательство теоремы \ref{Th6} непосредственно следует из доказательства теоремы \ref{Th2a}.

Отметим, что при синтезе закона управления с использованием функции Ляпунова $V(x)$ требуется выбрать $u$ так, чтобы было выполнено алгебраическое неравенство $grad\{V\}(f+gu)<0$. Согласно теореме \ref{Th6}, $u$ необходимо выбрать так, чтобы было выполнено дифференциальное неравенство, что дает новое условие поиска закона управления.

Применим теорему \ref{Th6} для стабилизации линейных систем.

\textit{Следствие 2.}
Рассмотрим систему $\dot{x}=Ax+Bu$ с управляемой парой $(A,B)$ и закон управления $u=Kx$. Пусть выполнено одно из следующих условий

\begin{equation}
\label{eq010}
\begin{array}{l}
(A+BK)^{\rm T}P+P(A+BK)-\kappa~ trace(A+BK)P<0,
\end{array}
\end{equation}
или
\begin{equation}
\label{eq10}
\begin{array}{l}
(A+BK)^{\rm T}P+P(A+BK)+\gamma P<0,
\end{array}
\end{equation} 
где $P$ -- положительно определенная симметричная матрица, $\kappa > 0$ и $trace(A+BK) \leq 0$ или только $\kappa = 0$, $\gamma>0$. Тогда замкнутая система устойчива.

Доказательство следствия 2 непосредственно следует из теоремы \ref{Th7} и следствия 1 в силу уравнения замкнутой системы $\dot{x}=(A+BK)x$.

Поскольку неравенства \eqref{eq010} (при $\kappa = 0$) и \eqref{eq10} являются линейными, то для расчета матрицы $K$ можно использовать любые существующие программные пакеты, решающие ЛМН. Если же воспользоваться теоремой \ref{Th4} \cite{Rantzer01}, то матричное неравенство \eqref{eq7} при наличии управления $u=Kx$ нельзя свести к ЛМН. Более того, как было показано ранее, неравенство \eqref{eq7} из \cite{Rantzer01} (заменив $A$ на $A+BK$) может иметь решение для неустойчивой матрицы $A+BK$, что не приведет к синтезу стабилизирующего регулятора. Например, для $A=\left[\begin{array}{cccc} {0} & {1} \\ {1} & {1} \end{array}\right]$, $B=\left[\begin{array}{cccc} {0} \\ {1} \end{array}\right]$, $K=[-0.7082 -2.2651]$ и $\alpha=1$, решение \eqref{eq7} (заменив $A$ на $A+BK$) определено в виде $P=\left[\begin{array}{cccc} {0.7712} & {0.3508} \\ {0.3508} & {1.122} \end{array}\right]$, а собственные числа матрицы $A+BK$ приблизительно равны $0.2$ и $-1.5$.

\section{Заключение}
\label{Sec6}

Предложены новые методы исследования устойчивости линейных и нелинейных динамических систем с использованием свойств потока и дивергенции вектора фазовой скорости, которые обобщают результаты \cite{Jukov99,Rantzer01,Furtat13}. Для исследования устойчивости требуется существование определенного вида поверхности интегрирования или вспомогательной скалярной функции, однако свойства которой зависят от свойств поверхности интегрирования. Сформулированы необходимые и достаточные условия устойчивости. Применение полученных результатов для линейных динамических систем позволяет свести задачу исследования устойчивости к вопросу разрешимости новых матричных неравенств, а также ЛМН. Результаты применены к синтезу законов управления для линейных и нелинейных динамических систем. Показано, что для выбора закона управления требуется разрешить дифференциальное неравенство относительно сигнала управления, в то время как при использовании аппарата функций Ляпунова требуется разрешить алгебраическое неравенство.

$ $

$ $

\AdditionalInformation{Фуртат И.Б.}{ИПМаш РАН, ведущий научный сотрудник, Университет ИТМО, профессор, Санкт-Петербург}{cainenash@mail.ru}

\newpage

Рис. 1. Геометрическая иллюстрация случая 1 теоремы \ref{Th1}.

Рис. 2. Геометрическая иллюстрация случая 2 теоремы \ref{Th1}.

Рис. 3. Фазовые траектории системы \eqref{eq4} при $c=1$ (слева) и $c=-1$ (справа).

Рис. 4. Фазовый портрет \eqref{eq04} при $b=0.1$.

Рис. 5. Фазовый портрет системы \eqref{eq004} с двумя точками равновесия.

Рис. 6. Фазовые траектории системы \eqref{eq6}.

\newpage
\pagestyle{empty}

\begin{figure}[h!]
\center{\includegraphics[width=0.9\linewidth]{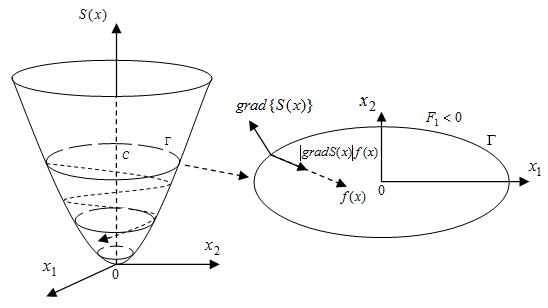}}
\caption{}
\label{Fig010}
\end{figure}

\newpage
\begin{figure}[h!]
\center{\includegraphics[width=0.7\linewidth]{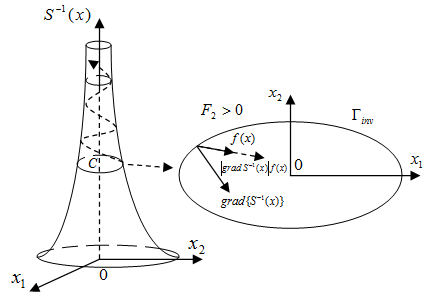}}
\caption{}
\label{Fig011}
\end{figure}

\newpage
\begin{figure}[h!]
\begin{minipage}[h]{0.47\linewidth}
\center{\includegraphics[width=1\linewidth]{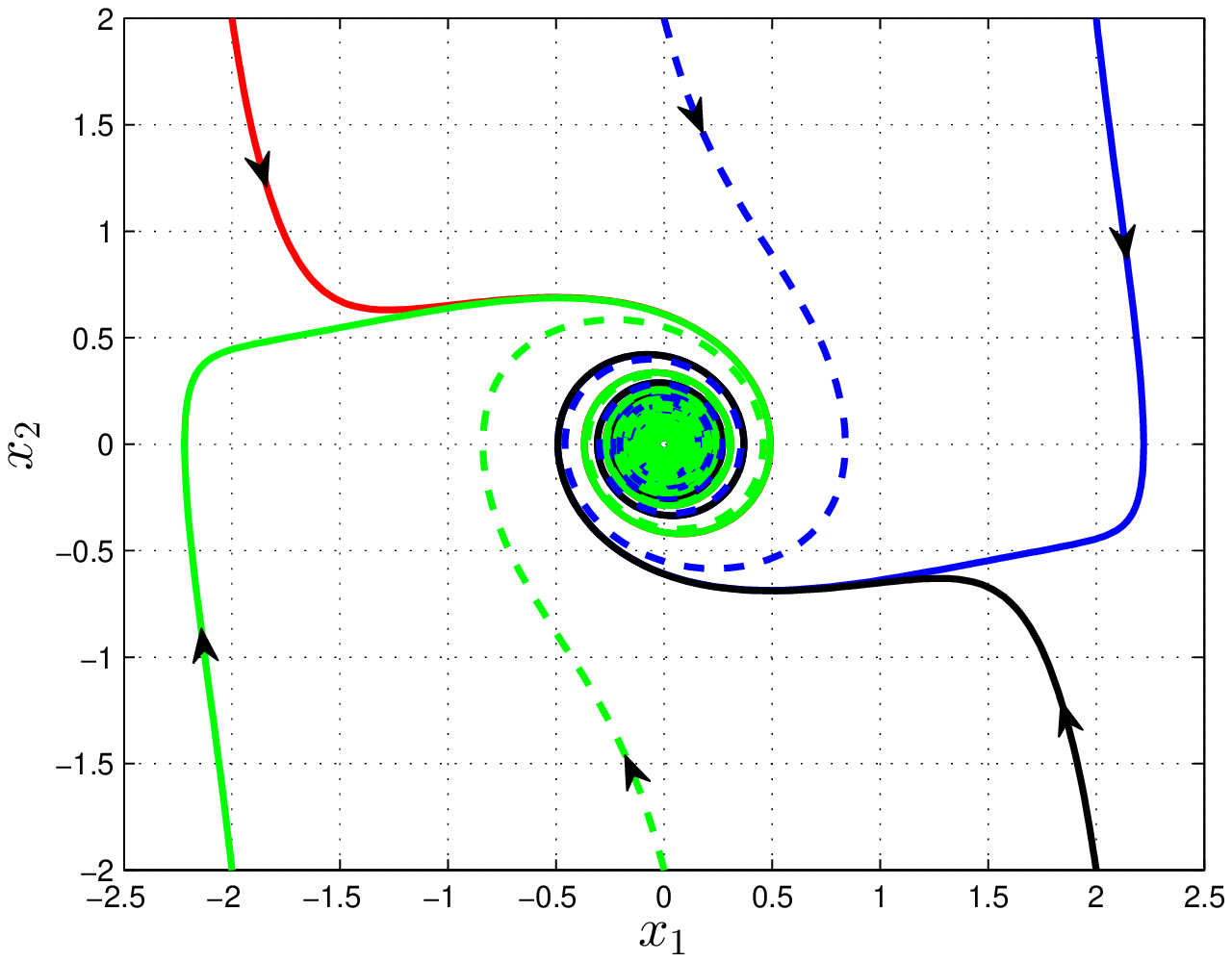}} \\
\end{minipage}
\hfill
\begin{minipage}[h]{0.47\linewidth}
\center{\includegraphics[width=1\linewidth]{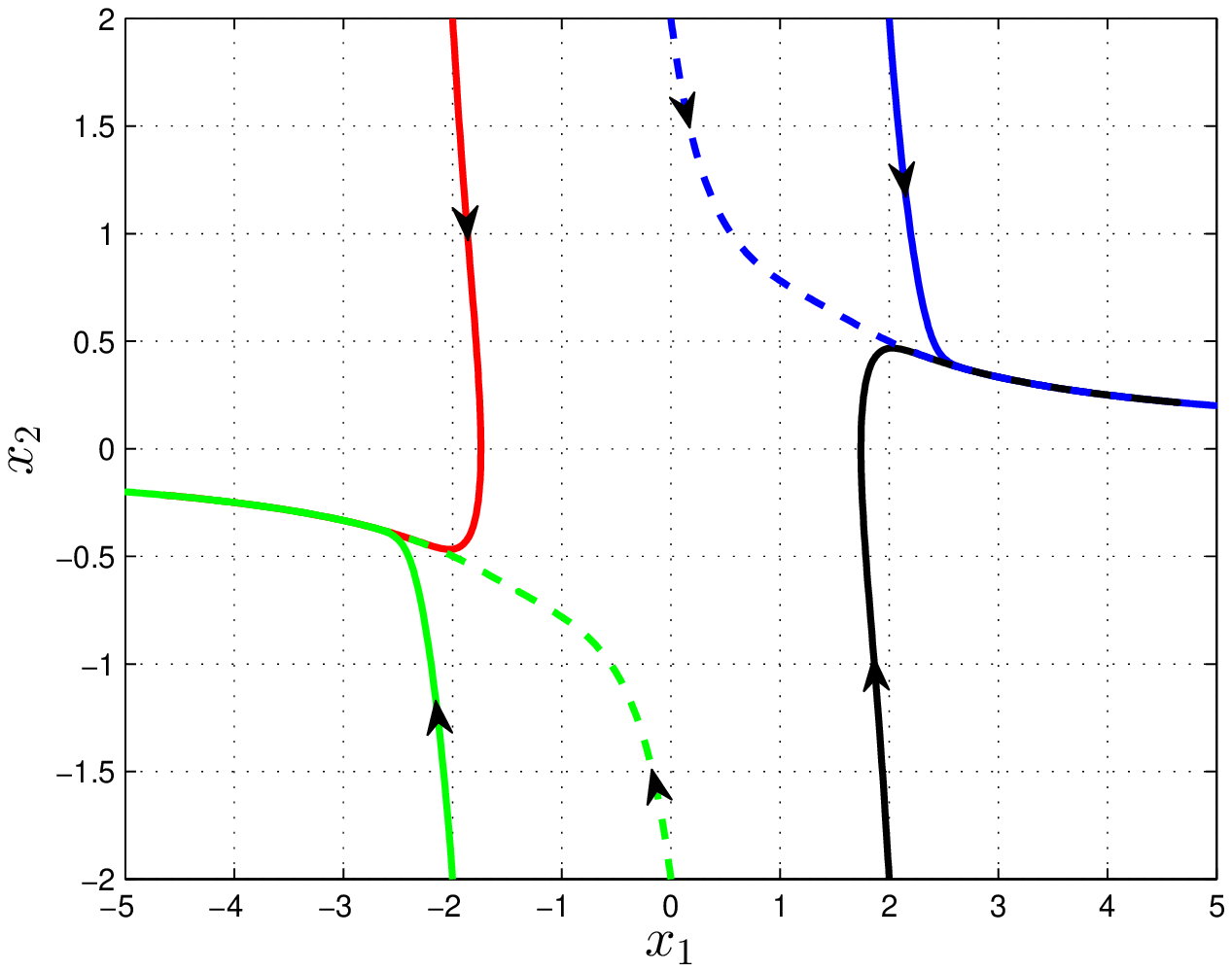}} \\
\end{minipage}
\caption{}
\label{Fig3}
\end{figure}

\newpage
\begin{figure}[h!]
\center{\includegraphics[width=0.7\linewidth]{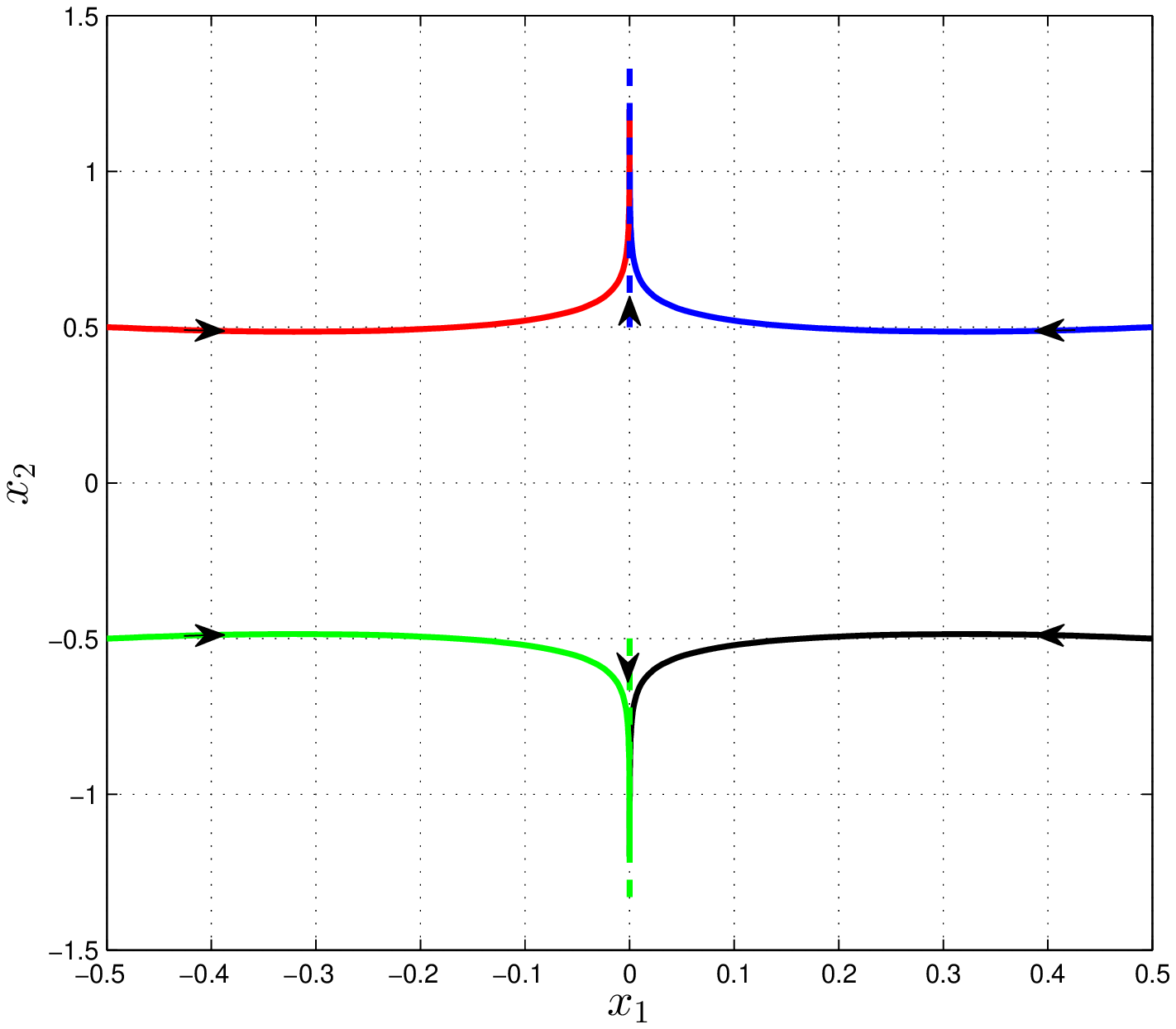}}
\caption{}
\label{Ust_chast_per}
\end{figure}

\newpage
\begin{figure}[h!]
\center{\includegraphics[width=0.7\linewidth]{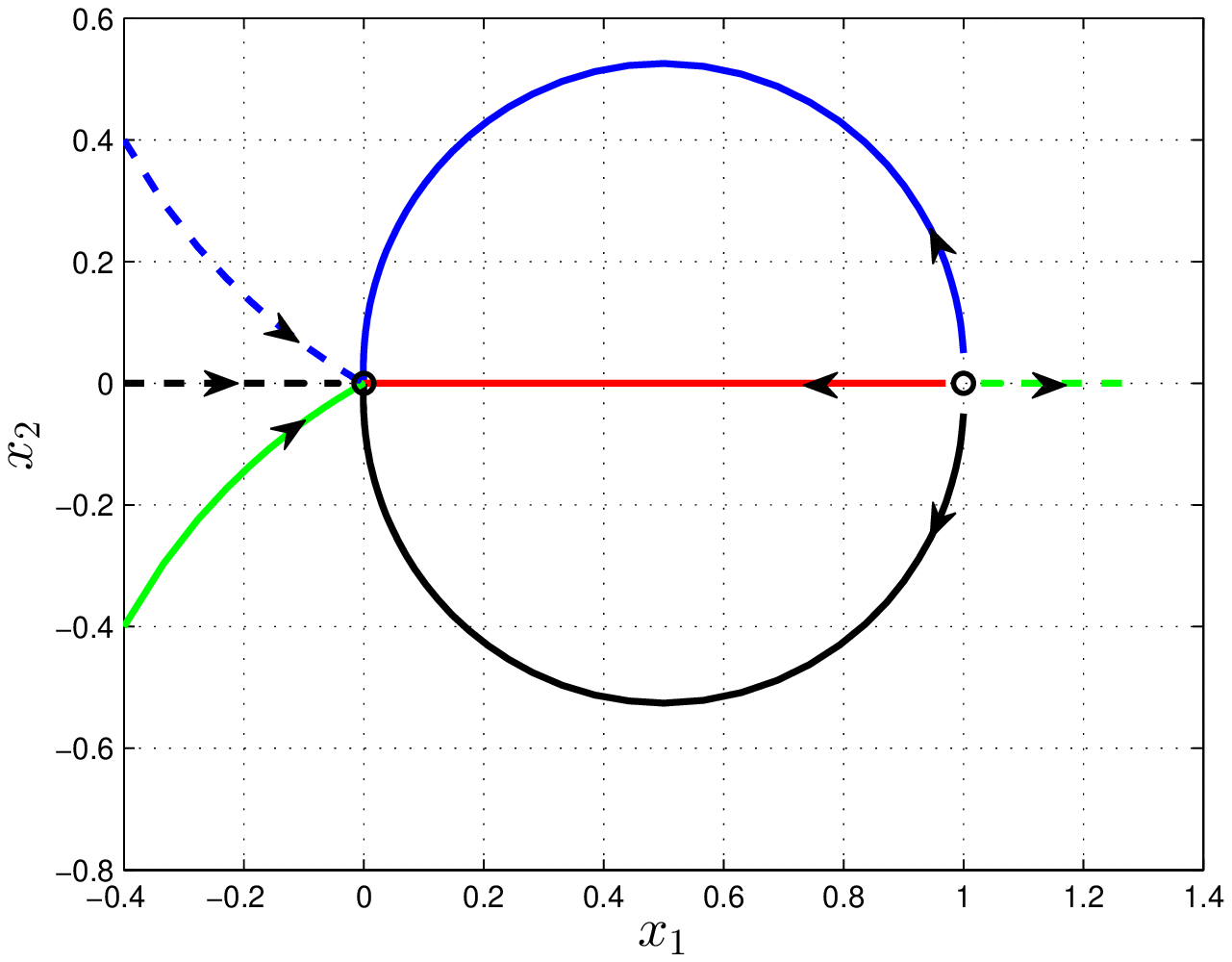}}
\caption{}
\label{Ust_Rantz}
\end{figure}

\newpage
\begin{figure}[h!]
\center{\includegraphics[width=0.7\linewidth]{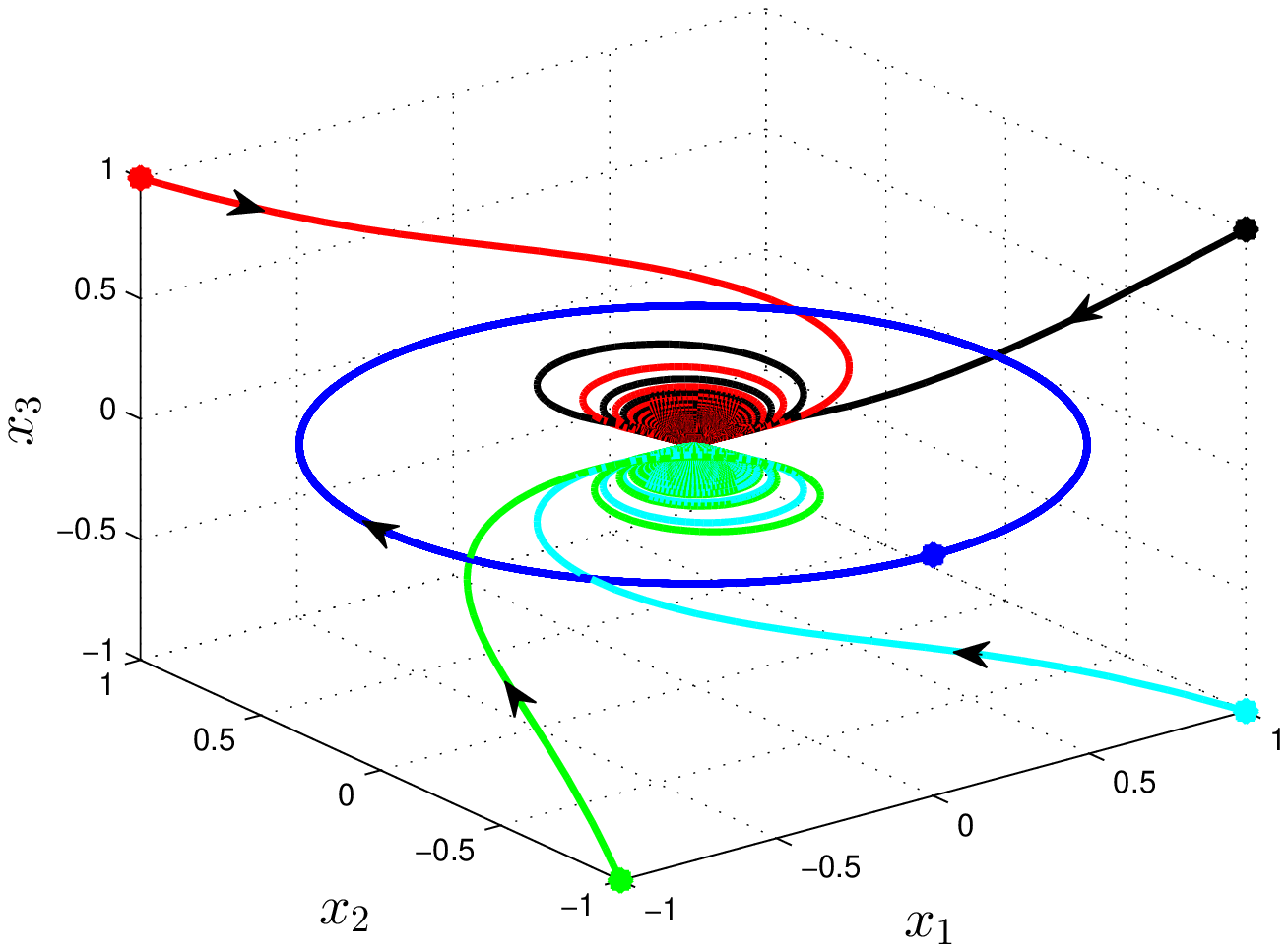}}
\caption{}
\label{Fig5}
\end{figure}

\newpage

\textit{Abstract}

New necessary and sufficient conditions are proposed for the stability investigation of dynamical systems using the flow and the divergence of the phase vector velocity. The obtained conditions generalize the well-known results of V.P. Zhukov and A. Rantzer. The relation of Lyapunov methods with the proposed methods is established. The application of the obtained results to study the stability of linear systems goes to the problem of matrix inequality solvability. The new control laws are  synthesized for linear and nonlinear systems. Examples illustrate the applicability of the proposed method and show the comparison results with some existing ones.

\textit{Key words:} dynamic system, stability, flow of a vector field, divergence, Gauss theorem, control.

\end{document}